\renewcommand{\@biblabel}[1]{[#1]}						
\newtheorem{theorem}{Theorem}
\newtheorem{lemma}[theorem]{Lemma}
\newtheorem{prop}[theorem]{Proposition}
\newtheorem{corollary}[theorem]{Corollary}
\theoremstyle{definition}
\newtheorem{observation}[theorem]{Observation}
\newtheorem{remark}[theorem]{Remark}
\newcommand{\diag}[1]{\operatorname{diag}\left( #1 \right)}		
\newcommand{\bb}[1]{\mathbb{#1}}						
\newcommand{\hyp}[2]{#1 \hyperref[#2]{\ref{#2}}}			
\newcommand{\conv}[1]{\operatorname{conv}\left( #1 \right)}		
\begin{document}

\title{Matricial Proofs of Some Classical Results about Critical Point Location}
\markright{Matricial Proofs of Some Classical Results}
\author{Charles R.~Johnson and Pietro Paparella}


\maketitle

\begin{abstract}
The Gauss--Lucas and B\^{o}cher--Grace--Marden theorems are classical results in the geometry of polynomials.  Proofs of the these results are available in the literature, but the approaches are seemingly different. In this work, we show that these theorems can be proven in a unified theoretical framework utilizing matrix analysis (in particular, using the field of values and the differentiator of a matrix). In addition, we provide a useful variant of a well-known result due to Siebeck.
\end{abstract}

\section{Introduction.}

The Gauss--Lucas and B\^{o}cher--Grace--Marden theorems are classical results that geometrically relate the location of the critical points of a polynomial to its zeros \cite{m1966,p2010}. (Kalman \cite{k2008b} attributes the latter result to Marden, who himself attributes the result to B\^{o}cher and Grace \cite{m1966}. Prasolov \cite[Theorem 1.2.3]{p2010} attributes the result to van den Berg \cite{vdb1888}.) There are several proofs of these statements in the literature, some either quite involved or incomplete (Kalman \cite{k2008b} notes that both the proofs of B\^{o}cher--Grace--Marden by Marden \cite{m1945,m1966} and B\^{o}cher \cite{b1892} are incomplete). Furthermore, the elementary approaches to these theorems in the literature are rather different. 

In recent decades there has been a realization, through several works, of how matrix-analytical ideas may be used to give insightful, and often simpler, proofs of facts about the zeros or critical points of polynomials (\cite[Chapters 5 and 6]{hj2013}, \cite{p2017, p2003, w1961}). Here, we continue the development of the use of matricial ideas in polynomials, using other important tools: the field of values of a matrix and the differentiator of a matrix. In particular, we give new and unified proofs of Gauss--Lucas, B\^{o}cher--Grace--Marden, and a variant of Siebeck that are quite brief relative to some expositions \cite{b2014, b1892, b2017, d2013, k2008b, m1945, m1966,mp2008}.

\section{Notation And Background.}

For \( n \in \mathbb{N}\), let \(\langle n \rangle\) denote the set \(\{1, \dots, n\}\). We let \( \conv{S} \) denote the \emph{convex hull} of a subset \( S \) of \( \mathbb{C} \).

For positive integers \(m\) and \(n\), let \(M_{m, n}(\mathbb{F})\) denote the set of \(m\text{-by-}n\) matrices with entries from a field \(\mathbb{F}\). In the case when \( m = n\), \(M_{m, n}(\mathbb{F})\) is abbreviated to \(M_n(\mathbb{F})\).

For \(A\in M_n(\mathbb{F}) \), we let \( \sigma(A) \) denote the \emph{spectrum} (i.e., multiset of eigenvalues) of \(A\);  \(A_{(i)}\) denote the \(i\){th} \emph{principal submatrix} of \(A\), i.e.,  \(A_{(i)}\) is the \((n-1)\)-by-\((n-1)\) matrix obtained by deleting the \(i\){th} row and \(i\){th} column of \(A\); and \(\tau(A)\) denote the \emph{normalized trace of \(A\)}, i.e., \( \tau(A):= (1/n) \trace A\). For \( x \in \bb{F}^n\) and \( i \in \langle n \rangle \), \( x_{(i)} \in \bb{F}^{n-1} \) denotes the vector obtained by deleting the \(i\){th} entry of \( x \). Let \(\diag{\lambda_1,\dots,\lambda_n}\) denote the \emph{diagonal matrix} whose \(i\){th} diagonal entry is \( \lambda_i\).

An $n$-by-$n$ matrix \(H\) is called a \emph{Hadamard matrix (of order $n$)} if \(h_{ij} \in \{ \pm 1 \}\) and \(HH^\top = nI\). A matrix \(H\) is called a \emph{complex Hadamard matrix (of order $n$)} if \(|h_{ij}| = 1\) and \(HH^* = nI\). Notice that for any complex Hadamard matrix \(H \in M_n({\mathbb{C}})\), the matrix \(U := \frac{1}{\sqrt{n}}H\) is unitary, i.e., \(U^*U = UU^* = I\). For a fixed positive integer \(n\) and the complex scalar \(\omega := \exp(-2\pi i/n)\), the matrix 
\[
 F = F_n :=
\begin{bmatrix}
1 & 1 & 1 & \dots & 1 \\
1 & \omega & \omega^2 & \dots & \omega^{n-1} \\
1 & \omega^2 & \omega^4 & \dots & \omega^{2(n-1)} \\
      \vdots & \vdots & \vdots & \ddots & \vdots \\
      1 & \omega^{n-1} & \omega^{2(n-1)} & \dots & \omega^{(n-1)^2}
    \end{bmatrix},
\]
is called the \emph{discrete Fourier transform (DFT) matrix (of order $n$)}. As is well known, and otherwise easy to establish, DFT matrices are complex Hadamard matrices.

\section{Differentiators and Trace Vectors.}

In \cite{p2003}, differentiators were studied and the concept of a trace vector was introduced to resolve several outstanding conjectures in the geometry of polynomials. 

For \(A \in M_n(\mathbb{C})\) and a unit vector \(z \in \mathbb{C}^n\), let \(P = P(z) := I - zz^* \in M_n(\mathbb{C})\).  If \(B := PAP|_{P\mathbb{C}^n}\) (the matrix \(B\) is called the \emph{compression of \(A\) onto \(P\mathbb{C}^n\)}), then \(P\) is called a \emph{differentiator (of \(A\))} if
\[ p_B(t) = \frac{1}{n} p'_A(t), \] 
in which \(p_M\) denotes the characteristic polynomial of \(M \in M_n(\mathbb{C})\).

If \(A \in M_n(\mathbb{C})\) and \(z\in \mathbb{C}^n\), then \(z\) is called a \emph{trace vector (for \(A\))} if \(z^*A^kz = \tau(A^k)\), for every nonnegative integer \(k\). Because of the case \(k=0\), it is clear that all trace vectors have unit length. 

In \cite[Theorem 2.5]{p2003}, it was shown that if \(A \in M_n(\mathbb{C})\) and \( P = I - zz^*\), then \(P\) is a differentiator of \(A\) if and only if \(z\) is a trace vector for \(A\). In addition, it was shown that every square matrix possesses at least one trace vector, i.e., every matrix possesses at least one differentiator \cite[Theorem 2.10]{p2003}. 

\begin{observation}
\label{obs:cancomps}
If \(e_i\) is a trace vector for \(A \in M_n(\mathbb{C})\), where \(e_i\) (\(1 \le i \le n\)) denotes the $i$th canonical basis vector of \(\mathbb{C}^n\), and \(P = I - e_ie_i^\top\), then the compression of \(A\) onto \(P\mathbb{C}^n\) is \(A_{(i)}\).
\end{observation}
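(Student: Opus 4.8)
The plan is to verify directly that, under the natural identification of \(P\mathbb{C}^n\) with \(\mathbb{C}^{n-1}\), the compression \(B = PAP|_{P\mathbb{C}^n}\) agrees entrywise with \(A_{(i)}\). In fact the trace-vector hypothesis is not needed to establish this equality of matrices; it is present because, by \cite[Theorem 2.5]{p2003}, it guarantees that \(P\) is a differentiator of \(A\), so the observation then upgrades to the statement \(p_{A_{(i)}}(t) = \tfrac1n p'_A(t)\), which is the form in which the result will be used.

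First I would pin down the subspace. Since \(P = I - e_ie_i^\top\) satisfies \(Pe_j = e_j\) for \(j \ne i\) and \(Pe_i = 0\), we have \(P\mathbb{C}^n = \operatorname{span}\{e_j : j \in \langle n \rangle,\ j \ne i\}\), the coordinate hyperplane, and \(\{e_j : j \ne i\}\) is an orthonormal basis of it; equivalently, \(P\) is the orthogonal projection onto this hyperplane. This supplies the canonical identification of \(P\mathbb{C}^n\) with \(\mathbb{C}^{n-1}\) in which coordinates are indexed by \(\langle n \rangle \setminus \{i\}\).

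Next I would compute the matrix of \(B\) in this orthonormal basis: for \(j, k \in \langle n \rangle \setminus \{i\}\), the \((j,k)\) entry of \(B\) is \(e_j^*(PAP)e_k = (Pe_j)^* A (Pe_k) = e_j^* A e_k = a_{jk}\). These are precisely the entries of the matrix obtained from \(A\) by deleting its \(i\)th row and \(i\)th column, i.e., of \(A_{(i)}\); hence \(B = A_{(i)}\).

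There is no real obstacle here — the computation is immediate from \(P^* = P\) and \(Pe_j = e_j\) for \(j \ne i\). The only point requiring care is bookkeeping: one must keep the identification of \(P\mathbb{C}^n\) with \(\mathbb{C}^{n-1}\) (and the induced indexing of entries by \(\langle n \rangle \setminus \{i\}\)) consistent with the definition of \(A_{(i)}\) as ``delete row \(i\) and column \(i\),'' so that the two descriptions genuinely coincide rather than merely agreeing up to a relabeling of basis vectors.
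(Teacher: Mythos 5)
Your proposal is correct, and since the paper states this as an \emph{Observation} with no proof at all, your direct computation --- using \(P^* = P\) and \(Pe_j = e_j\) for \(j \ne i\) to show the matrix of \(PAP|_{P\mathbb{C}^n}\) in the basis \(\{e_j : j \ne i\}\) is exactly \(A_{(i)}\) --- is precisely the verification the authors leave implicit. Your side remark is also accurate: the trace-vector hypothesis plays no role in the literal conclusion and matters only because, via \cite[Theorem 2.5]{p2003}, it lets the Observation be combined with the differentiator property to yield \(p_{A_{(i)}}(t) = \tfrac{1}{n}p_A'(t)\) in the proof of Theorem \ref{thm:main}.
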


The following result was given in \cite{hmpt2018}. 

\begin{lemma}
\label{lem:diagtrace}
Let \(D \in M_n(\mathbb{C})\) be a diagonal matrix and \(z \in \mathbb{C}^n\). If \(|z_i|=\frac{1}{\sqrt n}\) for every \(i \in \langle n \rangle\), then \(z\) is a trace vector for \(D\). 
\end{lemma}

\begin{theorem}
\label{thm:main}
Let $p$ be a polynomial of degree $n$ with zeros $\lambda_1,\dots, \lambda_n$ (including multiplicities) and critical points $\mu_1,\dots,\mu_{n-1}$ (including multiplicities). If \(H \in M_n(\mathbb{C})\) is a complex Hadamard matrix, \(D := \diag{\lambda_1, \dots, \lambda_n}\), and \(A := HDH^{-1}\), then \(\sigma \left(A_{(i)} \right) = \{ \mu_1,\dots,\mu_{n-1} \} \), for every \( i \in \langle n \rangle \).
\end{theorem}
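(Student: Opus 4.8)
The plan is to show, for each $i \in \langle n \rangle$, that the characteristic polynomial of $A_{(i)}$ is $\frac 1n p_A'$; since $A$ is similar to $D$ we have $\sigma(A) = \{\lambda_1,\dots,\lambda_n\}$, so $p_A$ and hence $\frac 1n p_A'$ have the announced roots, and the theorem follows. To reach $p_{A_{(i)}}$ we exploit the differentiator/trace-vector correspondence recalled above: it is enough to prove that the $i$th canonical basis vector $e_i$ is a trace vector for $A$. The first step toward this is to realize $A$ as a \emph{unitary} conjugate of $D$. Because $H$ is a complex Hadamard matrix, $HH^* = nI$, so $H^{-1} = \tfrac 1n H^*$ and
\[
 A = HDH^{-1} = \tfrac 1n HDH^* = U D U^*, \qquad U := \tfrac{1}{\sqrt n} H,
\]
with $U$ unitary. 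In particular $p_A(t) = \prod_{j=1}^n (t-\lambda_j)$ and, since trace is a similarity invariant, $\tau(A^k) = \tau(D^k)$ for every nonnegative integer $k$.

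The second step verifies the trace-vector property. Fix $i$ and set $z := U^* e_i$, a unit vector because $U$ is unitary. For every nonnegative integer $k$, using $A^k = U D^k U^*$,
\[
 e_i^* A^k e_i = e_i^* U D^k U^* e_i = z^* D^k z.
\]
The $j$th coordinate of $z$ is $\overline{U_{ij}} = \overline{h_{ij}}/\sqrt n$, so $\lvert z_j \rvert = 1/\sqrt n$ for all $j \in \langle n \rangle$; by Lemma~\ref{lem:diagtrace}, $z$ is then a trace vector for $D$, i.e., $z^* D^k z = \tau(D^k)$ for every $k$. Combining this with the displayed identity and with $\tau(D^k) = \tau(A^k)$ yields $e_i^* A^k e_i = \tau(A^k)$ for all $k \ge 0$; that is, $e_i$ is a trace vector for $A$.

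To conclude, by \cite[Theorem 2.5]{p2003} the matrix $P := I - e_i e_i^\top$ is a differentiator of $A$, so its compression $B$ onto $P\mathbb{C}^n$ satisfies $p_B(t) = \tfrac 1n p_A'(t)$; by Observation~\ref{obs:cancomps} this compression is exactly $A_{(i)}$, whence $p_{A_{(i)}} = \tfrac 1n p_A'$. Finally, since $p$ has degree $n$ and zero multiset $\{\lambda_1,\dots,\lambda_n\}$, it is a nonzero scalar multiple of $p_A$, so $p'$ is the same multiple of $p_A'$; hence $\sigma(A_{(i)})$ — the root multiset of $p_{A_{(i)}} = \tfrac 1n p_A'$ — equals the root multiset of $p'$, namely $\{\mu_1,\dots,\mu_{n-1}\}$.

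I do not expect a genuine obstacle here; the one place that demands care is the second step, where the complex Hadamard hypothesis is used in a single spot — to guarantee that every entry of the unitary $U = \tfrac{1}{\sqrt n}H$ has modulus $1/\sqrt n$, so that $z = U^* e_i$ satisfies the hypothesis of Lemma~\ref{lem:diagtrace} — combined with the remark that the normalized trace is a similarity invariant, which is what permits transferring the trace-vector identity from $D$ back to $A$.
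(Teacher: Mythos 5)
Your proposal is correct and follows essentially the same route as the paper's proof: realize $A = UDU^*$ with $U = \tfrac{1}{\sqrt{n}}H$ unitary, apply Lemma~\ref{lem:diagtrace} to $U^*e_i$ to show $e_i$ is a trace vector for $A$, and conclude via the differentiator correspondence and Observation~\ref{obs:cancomps}. The only difference is that you spell out the final step (identifying $p_{A_{(i)}} = \tfrac{1}{n}p_A'$ and matching its roots to the critical points of $p$) more explicitly than the paper does.
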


\begin{proof}
Let \( i \in \langle n \rangle \). If \(U := \frac{1}{\sqrt{n}}H\), then \(A = HDH^{-1} = UDU^{-1} = UDU^*\). Notice that \(U^* e_i\) is the entrywise complex conjugate of the \(i\){th} row of \(U\). Since each element of this row has modulus \(\frac{1}{\sqrt{n}}\), it follows from \hyp{Lemma}{lem:diagtrace} that \(U^* e_i\) is a trace vector for \(D\). Thus, if \(k\) is any nonnegative integer, then 
\begin{align*} 
e_i^* A^k e_i = e_i^*\left(UD^k U^*\right)e_i = (U^* e_i)^* D^k (U^* e_i) = \frac{1}{n}\trace D^k = \frac{1}{n}\trace A^k = \tau(A^k),    	
\end{align*}
i.e., \(e_i\) is trace vector for \(A\). The claim now follows from Observation \ref{obs:cancomps}.
\end{proof}  

\section{The Field of Values.}

The \emph{field (of values) of \( A \in M_n(\bb{C}) \)}, denoted by \(F(A)\), is defined by \( F(A) = \left\{ x^*A x : x^*x = 1 \right\} \subseteq \bb{C} \). A good general reference for the field is \cite[Chapter 1]{hj1994} and it is sometimes called the \emph{numerical range}. 

Recalling that \( A \in M_n(\bb{C}) \) is \emph{normal} if \( A^*A = AA^*\), the basic properties for us are as follows. 

\begin{prop} 
\label{fvprops}
If \( A \in M_n(\bb{C}) \), then:
 \begin{enumerate}
 [leftmargin=1cm, label=(\roman*)]
\item $F(A)$ is compact \cite[Property 1.2.1]{hj1994}; 
\item \(F(A)\) is convex \cite[\S 1.3]{hj1994};
\item \label{nrspectrum} \( \sigma(A) \subseteq F(A) \) \cite[Property 1.2.6]{hj1994}; 
\item \label{nrnormal} \( F(A) = \conv{\sigma(A)} \), whenever \(A\) is {normal} \cite[Property 1.2.9]{hj1994}; and
\item \label{nrsubmatrix} \( F(A_{(i)}) \subseteq F(A) \), \( i \in \langle n \rangle \) \cite[Property 1.2.11]{hj1994}.
\end{enumerate}
\end{prop}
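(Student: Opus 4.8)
The plan is to dispatch the five items separately. Four of them follow in one line each directly from the definition \(F(A) = \{x^*Ax : x^*x = 1\}\); the only substantive statement is the convexity asserted in (ii), which is the Toeplitz--Hausdorff theorem.

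First the easy parts. For (i), \(F(A)\) is the image of the unit sphere \(\{x \in \bb{C}^n : x^*x = 1\}\) under the continuous map \(x \mapsto x^*Ax\), and the sphere is compact, so \(F(A)\) is compact. For (iii), if \(\lambda \in \sigma(A)\) with unit eigenvector \(x\), then \(x^*Ax = \lambda(x^*x) = \lambda\), so \(\sigma(A) \subseteq F(A)\). For (iv), when \(A\) is normal the spectral theorem gives \(A = U\diag{\lambda_1,\dots,\lambda_n}U^*\) with \(U\) unitary; writing \(y := U^*x\) yields \(x^*Ax = \sum_{j=1}^{n}|y_j|^2\lambda_j\) with \(\sum_j |y_j|^2 = 1\), and as \(x\) ranges over unit vectors the coefficient vectors \((|y_1|^2,\dots,|y_n|^2)\) sweep out exactly the standard simplex, so \(F(A) = \conv{\{\lambda_1,\dots,\lambda_n\}} = \conv{\sigma(A)}\). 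For (v), given a unit vector \(y \in \bb{C}^{n-1}\), let \(x \in \bb{C}^n\) be obtained by inserting a \(0\) in coordinate \(i\), so that \(x^*x = 1\) and \(x^*Ax = y^*A_{(i)}y\); hence \(F(A_{(i)}) \subseteq F(A)\).

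For (ii), I would follow the classical reduction to the \(2\times 2\) case. Since \(x^*(\alpha A + \beta I)x = \alpha(x^*Ax) + \beta\), we have \(F(\alpha A + \beta I) = \alpha F(A) + \beta\), so after an affine change of variable it suffices to show that \(0, 1 \in F(A)\) implies \([0,1] \subseteq F(A)\). Choose unit vectors \(u, v\) with \(u^*Au = 0\) and \(v^*Av = 1\); the compression of \(A\) onto \(\operatorname{span}\{u,v\}\) has field of values contained in \(F(A)\) (again by the padding argument used for (v)), so the problem reduces to \(n \le 2\). The case \(n=1\) is vacuous, and for \(n = 2\) one parametrizes the unit sphere as \(x = (\cos\theta)\,e_1 + (e^{i\psi}\sin\theta)\,e_2\) and expands \(x^*Ax\); a direct computation shows \(F(A)\) is a (possibly degenerate) closed elliptical disk, which is convex --- this is the elliptical range theorem.

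The main obstacle is this last step: the affine reduction and the passage to \(\operatorname{span}\{u,v\}\) require a little care with degenerate cases, and the \(2\times 2\) computation, though elementary, is the one place where an actual argument --- rather than an unwinding of the definition --- is needed. Since all five facts are standard, one could alternatively simply cite the stated references in \cite{hj1994}, as the paper does.
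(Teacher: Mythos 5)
Your proposal is correct, but it takes a different route from the paper only in the sense that the paper offers no proof at all: Proposition~\ref{fvprops} is stated with citations to \cite{hj1994} and left there, whereas you supply actual arguments. Your one-line proofs of (i), (iii), (iv), and (v) are exactly the standard ones and are complete as written; in particular the padding argument for (v) is right, since a unit vector with a zero in coordinate $i$ sees only the entries of $A_{(i)}$ in the quadratic form. For (ii) your reduction is the classical Toeplitz--Hausdorff argument, and it is worth noting that it meshes nicely with the paper's own toolkit: the paper separately states the elliptical range theorem (Theorem~\ref{ert}), which disposes of the $2\times 2$ case without any further computation, so within this paper one could legitimately close your argument by citing that theorem rather than redoing the parametrization. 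The only places needing a touch more care, which you flag yourself, are (a) observing that $u$ and $v$ are linearly independent because $u^*Au \ne v^*Av$, so that $\operatorname{span}\{u,v\}$ is genuinely two-dimensional, and (b) taking the compression with respect to an \emph{orthonormal} basis $V$ of that span, so that $F(V^*AV) \subseteq F(A)$ by the same padding argument and still contains $0$ and $1$. With those two sentences added, your proof is a complete, self-contained replacement for the citations; the trade-off is length versus the paper's choice to treat these facts as known background.
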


If $p \in \mathbb{C}[x,y]$ is a polynomial of degree $n$, then the \emph{plane algebraic curve with respect to p}, denoted by $\gamma$ or $\gamma_p$, is defined by
\[ \gamma = \left\{ (x,y) \in \mathbb{C}^2 \mid p(x,y) = 0 \right\}. \] 
If $P \in \mathbb{C}[x,y,z]$ is a homogeneous polynomial of degree $n$, then the \emph{plane projective curve with respect to P}, denoted by $\kappa$ or $\kappa_P$, is defined by 
\[ \kappa = \left\{ (x,y,z) \in \mathbb{CP}^2 \mid P(x,y,z) = 0 \right\} \] 
(here $\mathbb{CP}^2$ denotes the complex projective plane). The \emph{degree of $\gamma$} (respectively, \emph{degree of $\kappa$}), denoted by $\deg \gamma$ (respectively, $\deg \kappa$), is defined by $\deg \gamma = \deg p$ (respectively, $\deg \kappa = \deg P$). 
The \emph{real part} of a plane algebraic curve $\gamma$ is defined by $\Re(\gamma) = \left\{ (x,y) \in \mathbb{R}^2 \mid p(x,y) = 0 \right\}$. The real part of a plane projective curve is defined similarly.

If $p \in \mathbb{C}[x,y]$, then $H[p](x,y,z) := z^{\deg p} p(x/z,y/z)$ is a homogeneous polynomial. If $P \in \mathbb{C}[x,y,z]$ is a homogeneous polynomial, then $B[P] (x,y) := P(x,y,1)$ is a bivariate (not necessarily homogeneous) polynomial. Thus, every plane algebraic curve can be identified with a plane projective curve and vice versa. 

If $\kappa_P$ is a plane projective curve of degree $n$, then the \emph{dual of $\kappa_P$}, denoted by $(\kappa_P)^\delta$ or $\kappa_{P^\delta}$, is the unique plane projective curve of degree $m$ such that $P^\delta(u, v, w) = 0$ if and only if the line \( ux + vy + wz = 0 \) is tangent to $\kappa_P$. The number $m$ is defined to be the \emph{class} of $\kappa_P$. It is well known that $((\kappa_P)^\delta)^\delta = \kappa_P$; thus, the dual curve has degree $m$ and class $n$. 

A point $Q$ of a plane projective curve is called a \emph{focus} if it is not equal to one of the \emph{circular points} $I = (1, i, 0)$ or $J = (1, -i, 0)$ and the lines $QI$ and $QJ$ are tangent to the curve. For more information on algebraic curves, the interested reader is directed to Salmon \cite{s1960}. 

The following fact is due to Kippenhahn.

\begin{theorem}
[Kippenhahn {\cite[Theorem 10]{k1951}}]
If $A$ is an $n$-by-$n$ matrix with complex entries, then there is a plane projective curve $\kappa_P$ of class $n$ such that 
\[ F(A) = \conv{\Re(\gamma_{P(x,y,1)})}. \] 
Furthermore, if $H_1 := (A + A^*)/2$ and $H_2 := (A-A^*)/(2i)$, then 
\begin{equation}
P^\delta = \left \vert H_1 u + H_2 v + I_n w \right \vert. 
\end{equation} 
\end{theorem}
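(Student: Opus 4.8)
The plan is to combine the Cartesian (Toeplitz) decomposition of \(A\) with the description of \(F(A)\) through its support function. Write \(A = H_1 + iH_2\) with \(H_1 = (A+A^*)/2\) and \(H_2 = (A-A^*)/(2i)\) Hermitian, so that \(\Re(x^*Ax) = x^*H_1x\) and \(\Im(x^*Ax) = x^*H_2x\) for every unit vector \(x\). For real \(u,v,w\) the matrix \(uH_1 + vH_2 + wI\) is Hermitian, hence has a real characteristic polynomial; consequently
\[ P^\delta(u,v,w) := \det(uH_1 + vH_2 + wI) \]
is a homogeneous polynomial of degree \(n\) with \emph{real} coefficients. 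Let \(\kappa_{P^\delta}\) be the plane projective curve it defines and set \(\kappa_P := (\kappa_{P^\delta})^\delta\); since \(\deg \kappa_{P^\delta} = n\), the curve \(\kappa_P\) has class \(n\). With \(P^\delta\) chosen this way the ``furthermore'' identity \(P^\delta = \left\vert H_1u + H_2v + I_nw\right\vert\) holds by construction, so it remains to prove \(F(A) = \conv{\Re(\gamma_{P(x,y,1)})}\).

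For the support function, fix \(\theta \in \bb{R}\) and put \(H_\theta := \cos\theta\,H_1 + \sin\theta\,H_2 = \Re(e^{-i\theta}A)\). For \(z = x+iy \in F(A)\) one has \(\Re(e^{-i\theta}z) = x\cos\theta + y\sin\theta\), and therefore
\[ \max\{\, x\cos\theta + y\sin\theta : x+iy \in F(A) \,\} = \max_{x^*x=1} x^*H_\theta x = \lambda_{\max}(H_\theta), \]
the largest eigenvalue of \(H_\theta\) (this also follows from \hyp{Proposition}{fvprops} applied to the Hermitian matrix \(H_\theta\)). Since \(F(A)\) is compact and convex (\hyp{Proposition}{fvprops}), it equals the intersection of its supporting half-planes,
\[ F(A) = \bigcap_{\theta \in \bb{R}} \{\, x+iy : x\cos\theta + y\sin\theta \le \lambda_{\max}(H_\theta) \,\}. \]
Now the supporting line \(\ell_\theta : x\cos\theta + y\sin\theta = \lambda_{\max}(H_\theta)\) has homogeneous line-coordinates \((\cos\theta, \sin\theta, -\lambda_{\max}(H_\theta))\), and \(P^\delta(\cos\theta,\sin\theta,-\lambda_{\max}(H_\theta)) = \det(H_\theta - \lambda_{\max}(H_\theta)I) = 0\); hence every supporting line of \(F(A)\) is a real tangent line of \(\kappa_P\), i.e., \(\partial F(A)\) is enveloped by tangent lines of \(\kappa_P\).

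From this the two inclusions follow. For \(F(A) \subseteq \conv{\Re(\gamma_{P(x,y,1)})}\): for all but finitely many \(\theta\) the eigenvalue \(\lambda_{\max}(H_\theta)\) is simple, with unit eigenvector \(x_\theta\); then \(\ell_\theta\) meets \(F(A)\) only at the boundary point \(q_\theta := x_\theta^*Ax_\theta\) (a point \(x^*Ax\) lies on \(\ell_\theta\) iff \(x\) maximizes \(x^*H_\theta x\)), and differentiating \(P^\delta(\cos\theta,\sin\theta,-\lambda_{\max}(H_\theta)) = 0\) in \(\theta\) (using the first-variation formula for \(\lambda_{\max}\)) identifies \(q_\theta\) as the point of contact of \(\ell_\theta\) with \(\kappa_P\), so \(q_\theta \in \Re(\gamma_{P(x,y,1)})\). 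As \(\theta\) varies these points are dense in \(\partial F(A)\); since the real affine part of \(\kappa_P\) is closed, \(\partial F(A) \subseteq \Re(\gamma_{P(x,y,1)})\), whence \(F(A) = \conv{\partial F(A)} \subseteq \conv{\Re(\gamma_{P(x,y,1)})}\). For the reverse inclusion one shows \(\Re(\gamma_{P(x,y,1)}) \subseteq F(A)\): a real affine point \(Q = (x_0,y_0)\) of \(\kappa_P\) is, dually, one for which the pencil of lines through \(Q\) is tangent to \(\kappa_{P^\delta} = \{P^\delta = 0\}\), i.e., the binary form \(\det\!\big(u(H_1 - x_0I) + v(H_2 - y_0I)\big)\) has a repeated linear factor; if \(Q \notin F(A)\) then some \(\ell_\theta\) strictly separates \(Q\) from \(F(A)\), and after rotating \(A\) so that this direction is \(\theta = 0\) the matrix \(H_1 - x_0I\) becomes negative definite, which forces all roots of that binary form to be real and simple — a contradiction.

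The delicate step, and the place where Kippenhahn's original argument needs care, is exactly this exclusion of real points of \(\kappa_P\) lying outside \(F(A)\), together with the algebraic-geometric bookkeeping it presupposes: the notions of dual curve and of tangent line — and hence the assertions ``\(\kappa_P\) has class \(n\)'' and ``\(q_\theta\) lies on \(\kappa_P\)'' — require the usual conventions for reducible or singular curves, since \(\kappa_{P^\delta}\) can be quite degenerate (for instance, when \(A\) is normal \(P^\delta\) splits into linear forms and \(\kappa_P\) collapses to the finite set \(\sigma(A)\), consistently with \hyp{Proposition}{fvprops}). A convenient way to organize the whole proof is to parametrize \(\partial F(A)\) by the eigenprojections of \(H_\theta\): on the open set of \(\theta\) where \(\lambda_{\max}(H_\theta)\) is simple the boundary point is \(q_\theta = x_\theta^*Ax_\theta\), eliminating \(\theta\) between \(P^\delta(\cos\theta,\sin\theta,-\lambda) = 0\) and its \(\theta\)-derivative traces an arc of \(\kappa_P\) (so these \(q_\theta\) satisfy \(P(x,y,1)=0\)), and the finitely many remaining boundary points — the corners, where \(\partial F(A)\) contains line segments — are limits of such \(q_\theta\) and hence lie on \(\kappa_P\) by closedness.
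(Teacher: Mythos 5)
The paper does not actually prove this theorem---it is quoted from Kippenhahn with a citation---so your attempt can only be measured against the standard argument. Your setup is exactly that argument: take $P^\delta(u,v,w)=\det(uH_1+vH_2+wI_n)$ (real, homogeneous of degree $n$, so its dual $\kappa_P$ has class $n$), compute the support function of $F(A)$ in the direction $\theta$ as $\lambda_{\max}(H_\theta)$ with $H_\theta=\cos\theta\,H_1+\sin\theta\,H_2$, observe that every supporting line $(\cos\theta,\sin\theta,-\lambda_{\max}(H_\theta))$ lies on $\kappa_{P^\delta}$, and locate the contact point at $q_\theta=x_\theta^*Ax_\theta$ by the envelope/first-variation computation. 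This correctly yields $F(A)\subseteq\conv{\Re(\gamma_{P(x,y,1)})}$, which is in fact the only direction the paper uses (Corollary \ref{pwac} needs only that $\partial F(A)$ lies on a real piecewise algebraic curve). One intermediate claim is too strong, though: ``$\partial F(A)\subseteq\Re(\gamma_{P(x,y,1)})$'' fails for normal $A$, where $\Re(\gamma_{P(x,y,1)})=\sigma(A)$ is finite while $\partial F(A)$ is a polygon; what survives, and what you need, is that the extreme points of $F(A)$ are limits of the contact points $q_\theta$.

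The genuine gap is in the reverse inclusion $\Re(\gamma_{P(x,y,1)})\subseteq F(A)$. You characterize a real affine point $Q=(x_0,y_0)$ of $\kappa_P$ by the condition that the binary form $f(u,v)=\det\bigl(u(H_1-x_0I)+v(H_2-y_0I)\bigr)$ have a repeated linear factor, and you claim that negative definiteness of $H_1-x_0I$ forces the roots of $f$ to be ``real and simple.'' Real, yes (they are eigenvalues of a definite Hermitian pencil); simple, no. Take $A=\diag{0,1,1}$ and $Q=(2,0)$: then $f(u,v)=-2u^3$ has a triple root even though $Q\notin F(A)=[0,1]$. What goes wrong is that $P^\delta=w(u+w)^2$ is reducible, the line of the pencil through $Q$ meets $\kappa_{P^\delta}$ at the singular point $(0,1,0)$, and intersection multiplicity at least two at a singular point is not tangency; so ``repeated factor'' does not define $\Re(\gamma_{P(x,y,1)})$. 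With the correct dual (closure of tangent lines at smooth points, taken component by component when $P^\delta$ is reducible), the inclusion is true but needs a different argument---this is precisely the step where Kippenhahn's own proof was later found to require repair. You flag the difficulty in your closing paragraph but do not resolve it, so the stated equality $F(A)=\conv{\Re(\gamma_{P(x,y,1)})}$ is not fully established by your argument.
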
 

\begin{corollary}
\label{pwac}
If \( A \in M_n (\bb{C}) \), then the boundary of \(F(A)\) is a piecewise algebraic curve.
\end{corollary}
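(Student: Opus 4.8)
The plan is to deduce the corollary directly from Kippenhahn's theorem, which has just been stated. By that theorem there is a plane projective curve $\kappa_P$ of class $n$ with $F(A) = \conv{\Re(\gamma_{P(x,y,1)})}$; I would set $C := \Re(\gamma_{P(x,y,1)})$, the real affine part of the Kippenhahn curve. Since $P$ is a homogeneous polynomial of degree (= class of the dual, i.e.) the class of $\kappa_P$, its dehomogenization $P(x,y,1)$ is a polynomial in $\mathbb{R}[x,y]$ of degree at most $n$, so $C$ is a real plane algebraic curve of finite degree. The key observation is that the boundary of the convex hull of a \emph{bounded} real algebraic curve is itself a finite union of line segments and arcs of that curve — each such arc being a piece of an algebraic curve, and each segment being a (trivially) algebraic arc — which is precisely the meaning of ``piecewise algebraic.''

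First I would record that $F(A)$ is compact and convex by Proposition~\ref{fvprops}(i)--(ii), so its boundary is a closed convex curve and $C$ is bounded (being contained in $\overline{F(A)}=F(A)$ up to the relationship $F(A)=\conv{C}$, which forces $C$ to lie in $F(A)$). Next I would argue that every point of $\partial F(A)$ either lies on $C$ or lies on a closed segment joining two points of $C$: a boundary point of a convex hull of a compact set $C$ either belongs to $C$ or lies in the relative interior of a face of $\conv{C}$, and in the plane a proper face of dimension one is a segment with endpoints in $C$. Then I would invoke the finiteness: because $C$ is a real algebraic curve of degree at most $n$, a supporting line of $\conv{C}$ meets $C$ in at most $n$ points (unless the line is a component of $C$, which can happen for only finitely many lines), so only finitely many maximal boundary segments occur; deleting their relative interiors leaves a boundary that is covered by finitely many arcs of $C$. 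Assembling these finitely many algebraic arcs (arcs of $C$) and finitely many segments exhibits $\partial F(A)$ as a piecewise algebraic curve.

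The main obstacle is the finiteness/structure argument in the previous paragraph: one must rule out pathologies such as a boundary consisting of infinitely many segments, or the curve $C$ having a line as one of its components contributing an entire edge. Both are handled by B\'ezout-type degree bounds — a line not contained in $C$ meets the degree-$n$ curve $C$ in at most $n$ points, and $C$ has at most finitely many linear components — together with compactness of $F(A)$, which prevents unbounded edges. I would phrase this carefully but briefly, since a fully rigorous real-algebraic-geometry treatment (e.g.\ via semialgebraic decomposition, which guarantees $C$ has finitely many connected components each a piecewise-smooth arc) would be longer than warranted for a corollary; citing the standard fact that a bounded real algebraic set is semialgebraic and hence has finitely many arcwise components suffices. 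With that in hand, the corollary follows immediately: the boundary of $F(A)$ is assembled from finitely many arcs, each either lying on the algebraic curve $C$ or being a line segment, so it is piecewise algebraic.
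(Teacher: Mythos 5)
Your argument is correct and follows the paper's (unstated) route: the paper offers no proof at all, treating the corollary as immediate from Kippenhahn's theorem, and your fleshed-out version --- the boundary of the convex hull of a bounded real algebraic curve decomposes into finitely many arcs of the curve and finitely many line segments --- is exactly the intended justification. One small slip: the Kippenhahn curve $\kappa_P$ has \emph{class} $n$, so its degree is in general bounded by $n(n-1)$ rather than by $n$, but any finite degree bound suffices for your B\'ezout-type finiteness argument, so the proof stands.
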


The following result, called the \emph{elliptical range theorem}, is well known \cite{d1957,hj1994,j1974,l1996,m1932}. 

\begin{theorem}
[Elliptical range theorem]
\label{ert}
If \( A \in M_2(\bb{C})\) and \( \sigma(A) = \{ \lambda_1, \lambda_2 \}\), then the field of values of \( A \) is a (possibly degenerate) elliptical disk with foci \( \lambda_1\) and \( \lambda_2\), and minor axis \( \sqrt{\trace{(A^*A)} - |\lambda_1|^2 - |\lambda_2|^2} \).
\end{theorem}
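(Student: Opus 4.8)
The plan is to use the standard symmetries of the field of values to reduce to an upper-triangular matrix, to compute its field of values explicitly as a one-parameter union of circles, and to recognize that union as a filled ellipse. Recall that $F(\alpha A + \beta I) = \alpha F(A) + \beta$ for all scalars $\alpha,\beta \in \mathbb{C}$, that $F(U^*AU) = F(A)$ for every unitary $U$, and that unitary similarity preserves both $\sigma(A)$ and $\trace(A^*A)$. By Schur's unitary triangularization theorem — followed, if necessary, by a diagonal unitary similarity clearing the phase of the off-diagonal entry — the matrix $A$ is unitarily similar to $T := \begin{bmatrix} \lambda_1 & r \\ 0 & \lambda_2 \end{bmatrix}$ with $r \ge 0$. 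Since $\trace(T^*T) = |\lambda_1|^2 + |\lambda_2|^2 + r^2$, we have $r = \sqrt{\trace(A^*A) - |\lambda_1|^2 - |\lambda_2|^2}$, while $F(A) = F(T)$; so it suffices to show that $F(T)$ is the elliptical disk with foci $\lambda_1,\lambda_2$ and minor axis $r$.

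Next, I would compute $F(T)$ directly. For a unit vector $x = (x_1,x_2)^\top$, setting $t := |x_1|^2 \in [0,1]$ gives
\[
x^*Tx = t\lambda_1 + (1-t)\lambda_2 + r\,\overline{x_1}x_2,
\]
where, given $|x_1|^2 = t$ and $|x_2|^2 = 1-t$, the product $\overline{x_1}x_2$ is an arbitrary complex number of modulus $\sqrt{t(1-t)}$. Hence, for each fixed $t$, as $x$ ranges over the unit vectors with $|x_1|^2 = t$ the value $x^*Tx$ traces the circle $C_t$ centered at $c(t) := t\lambda_1 + (1-t)\lambda_2 \in \conv{\{\lambda_1,\lambda_2\}}$ of radius $\rho(t) := r\sqrt{t(1-t)}$, so that $F(T) = \bigcup_{t\in[0,1]} C_t$ — a union of circles whose centers traverse the segment from $\lambda_2$ to $\lambda_1$ and whose radii rise from $0$ to $r/2$ and fall back.

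It then remains to identify this union with the claimed disk. Applying a translation and a rotation (which carry elliptical disks to elliptical disks and act correctly on the foci and on the minor axis), I may assume $\lambda_1 = \delta$, $\lambda_2 = -\delta$ with $\delta \ge 0$, and I treat first the case $\delta > 0$, $r > 0$. Writing $m := r/2$ and $a := \sqrt{m^2 + \delta^2}$, let $E := \{(X,Y) : X^2/a^2 + Y^2/m^2 \le 1\}$; then $E$ has foci $\pm\delta$ (since $a^2 - m^2 = \delta^2$) and minor axis $2m = r$. With $t = (1+\cos\phi)/2$, $\phi\in[0,\pi]$, the circle $C_t$ becomes the circle centered at $(\delta\cos\phi,0)$ of radius $m\sin\phi$; parametrizing its points as $X = \delta\cos\phi + m\sin\phi\cos\alpha$ and $Y = m\sin\phi\sin\alpha$, the inequality defining $E$ collapses to $(\delta\sin\phi\cos\alpha - m\cos\phi)^2 \ge 0$, which gives $C_t \subseteq E$ and hence $F(T) \subseteq E$. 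Equality holds exactly at the points where $C_t$ is (internally) tangent to $\partial E$, and as $\phi$ varies these tangency points sweep out all of $\partial E$; since $F(T) = \bigcup_t C_t$ is convex (by the convexity of the field of values, \hyp{Proposition}{fvprops}) and contains $\partial E$, it contains $\conv{\partial E} = E$, whence $F(T) = E$. The two degenerate configurations — $\lambda_1 = \lambda_2$ (where $\bigcup_t C_t$ is a disk of radius $r/2$) and $r = 0$ (where $T$ is normal and, consistently with \hyp{Proposition}{fvprops}, $\bigcup_t C_t = \conv{\{\lambda_1,\lambda_2\}}$) — are handled directly and yield degenerate ellipses.

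I expect the only genuine obstacle to lie in this last step: confirming that the tangency points of the circles $C_t$ exhaust $\partial E$, and handling the degenerate cases, requires a little care, whereas everything preceding it is routine. (An alternative route is to apply Kippenhahn's theorem: for a $2$-by-$2$ matrix $P^\delta = |H_1 u + H_2 v + I_2 w|$ is a quadratic form, so $\gamma_P$ is a conic, necessarily a (possibly degenerate) ellipse because $F(A)$ is bounded; a short computation with the tangency criterion then shows that the affine point $q \in \mathbb{C}$ is a focus of $\gamma_P$ exactly when $q \in \sigma(A)$, leaving only the minor axis to be computed.)
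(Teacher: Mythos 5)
The paper offers no proof of \hyp{Theorem}{ert}: it is quoted as a well-known result and supported only by the citations \cite{d1957,hj1994,j1974,l1996,m1932}, so there is no in-paper argument to compare against. Your proof is correct, and it is essentially the classical argument found in those references (e.g., \cite{j1974,l1996} and \cite[\S 1.3]{hj1994}): Schur triangularization followed by a diagonal unitary similarity reduces to an upper-triangular \(T\) with off-diagonal entry \(r\ge 0\); the unitary invariance of \(\trace(A^*A)\) identifies \(r\) with the claimed minor axis length; and the explicit computation of \(x^*Tx\) exhibits \(F(T)\) as the union of the circles \(C_t\), which you then match with the filled ellipse. Your computations check out, including the identity \(\text{RHS}-\text{LHS} = (m\cos\phi-\delta\sin\phi\cos\alpha)^2\). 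Two points deserve to be made explicit in a polished write-up. First, not every \(C_t\) meets \(\partial E\): tangency requires \(\abs{\cos\phi}\le \delta/a\), and on that range the tangency abscissa is \(X=(a^2/\delta)\cos\phi\), which sweeps all of \([-a,a]\) with both signs of \(Y\) attained; so your claim that the tangency points exhaust \(\partial E\) is right, but only after this restriction is noted. Second, invoking convexity of the field of values (\hyp{Proposition}{fvprops}(ii)) to pass from \(\partial E\subseteq F(T)\) to \(E\subseteq F(T)\) is harmless in the context of this paper, which takes convexity as given; but since Toeplitz--Hausdorff is itself commonly proved by reduction to the \(2\)-by-\(2\) case, a fully self-contained treatment should instead verify directly that every point of \(E\) lies on some \(C_t\) (an easy intermediate-value argument in \(t\)), which removes any appearance of circularity.
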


\section{Proofs.}

First we see that the Gauss--Lucas theorem is an almost immediate consequence of the previous results. 

\begin{theorem}
[Gauss--Lucas]
\label{thm:gl}
If $p$ is a polynomial with zeros $\lambda_1,\dots, \lambda_n$ (including multiplicities) and critical points $\mu_1,\dots,\mu_{n-1}$ (including multiplicities), then 
\[ \{ \mu_1,\dots,\mu_{n-1}\} \subseteq \conv{\{ \lambda_1,\dots, \lambda_n \}}. \] 
\end{theorem}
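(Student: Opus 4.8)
The plan is to deduce Gauss--Lucas directly from \hyp{Theorem}{thm:main} together with the elementary field-of-values facts in \hyp{Proposition}{fvprops}. First, fix a complex Hadamard matrix $H \in M_n(\bb{C})$ — for instance the DFT matrix $F_n$ — and set $D := \diag{\lambda_1, \dots, \lambda_n}$ and $A := HDH^{-1}$. Since $A$ is similar to the diagonal matrix $D$, its spectrum is $\{\lambda_1, \dots, \lambda_n\}$, and by \hyp{Theorem}{thm:main} the principal submatrix $A_{(1)}$ has spectrum exactly $\{\mu_1, \dots, \mu_{n-1}\}$, the multiset of critical points of $p$.

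Next I would chain the containments. By part \ref{nrspectrum} of \hyp{Proposition}{fvprops}, $\sigma(A_{(1)}) \subseteq F(A_{(1)})$; by part \ref{nrsubmatrix}, $F(A_{(1)}) \subseteq F(A)$; so every critical point lies in $F(A)$. Now the key point is that $A = HDH^{-1}$ can be taken to be \emph{normal}: with $U := \frac{1}{\sqrt n}H$ unitary, $A = UDU^*$, and since $D$ is diagonal (hence normal) and unitary conjugation preserves normality, $A$ is normal with $\sigma(A) = \{\lambda_1, \dots, \lambda_n\}$. Therefore part \ref{nrnormal} gives $F(A) = \conv{\sigma(A)} = \conv{\{\lambda_1, \dots, \lambda_n\}}$. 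Combining the three inclusions yields $\{\mu_1, \dots, \mu_{n-1}\} \subseteq \conv{\{\lambda_1, \dots, \lambda_n\}}$, which is the desired statement.

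The only genuine subtlety — and the step most worth stating carefully — is ensuring that the matrix $A$ constructed in \hyp{Theorem}{thm:main} really is normal, so that the convexity identity \ref{nrnormal} applies rather than merely the containment $\conv{\sigma(A)} \subseteq F(A)$ (which would be circular, since it is essentially Gauss--Lucas in disguise). This is immediate once one observes $H^{-1} = \frac{1}{n}H^* = U^*U^{-1}\cdot(\text{scalar})$, i.e. $HDH^{-1} = UDU^*$ with $U$ unitary; no degree-$n$ computation is needed. Everything else is a one-line invocation of previously established results, so the proof should be only three or four sentences long.
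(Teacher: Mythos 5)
Your proposal is correct and follows essentially the same route as the paper: construct $A = UDU^*$ with $U$ the normalized DFT (or any normalized complex Hadamard) matrix, apply Theorem \ref{thm:main} to identify $\sigma(A_{(i)})$ with the critical points, and chain parts \ref{nrspectrum}, \ref{nrsubmatrix}, and \ref{nrnormal} of Proposition \ref{fvprops}. Your explicit remark that normality of $A$ is the point that makes \ref{nrnormal} an equality rather than a mere containment is a nice clarification of what the paper leaves implicit in the definition $A := UDU^*$.
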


\begin{proof}
Let \( D := \diag{\lambda_1, \dots, \lambda_n}\), $U := F/\sqrt{n}$, and \( A := U D U^*\), in which \( F \) is the DFT matrix of order \(n\). For any \( i \in \langle n \rangle \), notice that, following \hyp{Thereom}{thm:main}, and parts \ref{nrspectrum}, \ref{nrnormal}, and \ref{nrsubmatrix} of \hyp{Proposition}{fvprops}, we obtain 
\begin{equation*}
\{ \mu_1,\dots,\mu_{n-1}\} = \sigma(A_{(i)}) \subseteq F(A_{(i)}) \subseteq F(A) = \conv{\{ \lambda_1,\dots,\lambda_n \}}. \qedhere
\end{equation*}
\end{proof}

If $p$ is a polynomial with real zeros $\lambda_1 \ge \cdots \ge \lambda_n$ (including multiplicities), then not only are the critical points real (as a consequence of Gauss--Lucas), but the critical points $\mu_1 \ge \cdots \ge \mu_{n-1}$ (including multiplicities) must also satisfy 
\[
\lambda_1 \ge \mu_1 \ge \lambda_2 \ge \cdots \ge \lambda_{n-1} \ge \mu_{n-1} \ge \lambda_n,
\]
because the eigenvalues of the Hermitian matrix \( A_{(i)} \) interlace those of the Hermitian matrix $A$ by the Cauchy interlacing theorem \cite[Theorem 4.3.17]{hj2013}. We thus obtain an almost effortless proof of the following classical result (cf., \cite[Theorem 2.1]{f2006}).

\begin{theorem}
Let \(p\) be a polynomial with real zeros \(\lambda_1,\dots,\lambda_n\) (including multiplicities) and critical points \(\mu_1,\dots,\mu_{n-1}\) (including multiplicities), each listed in descending order. Then the critical points of \(p\) interlace the roots of \(p\), i.e.,
\[
\lambda_1 \ge \mu_1 \ge \lambda_2 \ge \cdots \ge \lambda_{n-1} \ge \mu_{n-1} \ge \lambda_n.
\]
\end{theorem}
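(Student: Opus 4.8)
The plan is to reduce the interlacing statement to the Cauchy interlacing theorem applied to the matrix construction of Theorem~\ref{thm:main}. First I would observe that since $p$ has real zeros $\lambda_1 \ge \cdots \ge \lambda_n$, the diagonal matrix $D := \diag{\lambda_1,\dots,\lambda_n}$ is real and symmetric, hence Hermitian. Taking $U := F/\sqrt{n}$ (the normalized DFT matrix, which is unitary) and $A := UDU^*$, the matrix $A$ is unitarily similar to a Hermitian matrix and is therefore itself Hermitian, with $\sigma(A) = \sigma(D) = \{\lambda_1,\dots,\lambda_n\}$.

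Next I would fix any $i \in \langle n \rangle$ and consider the principal submatrix $A_{(i)}$. As a principal submatrix of a Hermitian matrix, $A_{(i)}$ is again Hermitian, so its eigenvalues are real. By Theorem~\ref{thm:main}, $\sigma(A_{(i)}) = \{\mu_1,\dots,\mu_{n-1}\}$, so in particular the critical points $\mu_1,\dots,\mu_{n-1}$ of $p$ are real (this also follows from Gauss--Lucas, since the convex hull of real points lies on the real line). List them in descending order $\mu_1 \ge \cdots \ge \mu_{n-1}$.

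Then I would apply the Cauchy interlacing theorem \cite[Theorem 4.3.17]{hj2013} to the Hermitian matrix $A$ and its $(n-1)$-by-$(n-1)$ principal submatrix $A_{(i)}$: if the eigenvalues of $A$ in descending order are $\lambda_1 \ge \cdots \ge \lambda_n$ and those of $A_{(i)}$ in descending order are $\mu_1 \ge \cdots \ge \mu_{n-1}$, then
\[
\lambda_1 \ge \mu_1 \ge \lambda_2 \ge \cdots \ge \lambda_{n-1} \ge \mu_{n-1} \ge \lambda_n,
\]
which is exactly the desired conclusion.

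There is essentially no obstacle here: every ingredient is already in place. The only points requiring a word of care are (a) confirming that unitary similarity preserves the Hermitian property and the spectrum, so that $A$ really is Hermitian with the right eigenvalues, and (b) invoking Theorem~\ref{thm:main}, which identifies $\sigma(A_{(i)})$ with the multiset of critical points regardless of which index $i$ is deleted. Both are immediate, so the proof is genuinely short; the bulk of the work was done in establishing Theorem~\ref{thm:main} and in the statement of Cauchy interlacing.
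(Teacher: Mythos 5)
Your proposal is correct and follows exactly the paper's argument: build the Hermitian matrix $A = UDU^*$ from the real zeros, invoke Theorem~\ref{thm:main} to identify $\sigma(A_{(i)})$ with the critical points, and conclude by Cauchy interlacing. No differences worth noting.
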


\begin{lemma}
\label{lem:sides}
Let \(A = U D U^* \in M_n(\mathbb{C})\), be normal with $D = \diag{\lambda_1, \dots,\lambda_n}$ and $U$ unitary. Let the polygon $P$ be the boundary of $F(A) = \conv{\lambda_1,\dots,\lambda_n}$. Let $f_A : \mathbb{S}^n \longrightarrow \mathbb{C}$ be such that $x \longmapsto x^* Ax$ (here $\mathbb{S}^n$ is the Euclidean unit sphere in $\mathbb{C}^n$). Let $\lambda_i$ and $\lambda_j$ be adjacent vertices of $P$ such that (i) $\lambda_i$ and $\lambda_j$ are simple eigenvalues of $A$; and (ii) one of the open half-planes determined by the line that passes through $\lambda_i$ and $\lambda_j$ contains the remaining eigenvalues of $A$. If $z$ is on the line segment $\lambda_i \lambda_j$, then $f_A^{-1}(\{ z \}) \subseteq \text{span}(u_i,u_j)$ (here $u_k$ denotes the $k$th column of $U$). 
\end{lemma}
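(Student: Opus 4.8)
The plan is to exploit the fact that $A$ is normal, so in the eigenbasis $\{u_1,\dots,u_n\}$ a unit vector $x = \sum_k c_k u_k$ satisfies $f_A(x) = x^*Ax = \sum_k |c_k|^2 \lambda_k$, a convex combination of the eigenvalues with weights $w_k := |c_k|^2$ summing to $1$. Thus the condition $f_A(x) = z$ says precisely that $z$ is the particular convex combination $\sum_k w_k \lambda_k$. The goal is to show that if $z$ lies on the segment $\lambda_i\lambda_j$, then necessarily $w_k = 0$ for all $k \neq i,j$, which forces $x \in \operatorname{span}(u_i,u_j)$.

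First I would set up coordinates: by hypothesis (ii) there is a line $\ell$ through $\lambda_i$ and $\lambda_j$ such that all the other eigenvalues $\lambda_k$ ($k \neq i,j$) lie in one open half-plane determined by $\ell$. Choose an affine-linear functional $L : \mathbb{C} \to \mathbb{R}$ (that is, the real part of an affine-linear map, or equivalently a real inner product with a normal vector to $\ell$ plus a constant) with $L \equiv 0$ on $\ell$ and $L > 0$ on the open half-plane containing the other eigenvalues. Then $L(\lambda_i) = L(\lambda_j) = 0$, while $L(\lambda_k) > 0$ for every $k \neq i,j$. Applying $L$ to the identity $z = \sum_k w_k \lambda_k$ and using that $z \in \ell$ gives $0 = L(z) = \sum_k w_k L(\lambda_k) = \sum_{k \neq i,j} w_k L(\lambda_k)$. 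Since every term in this last sum is nonnegative (indeed strictly positive whenever $w_k > 0$), each must vanish, so $w_k = 0$ for all $k \neq i,j$; equivalently $c_k = 0$ for $k \neq i,j$, i.e. $x \in \operatorname{span}(u_i,u_j)$, as claimed.

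I would then note where hypotheses (i) and (ii) are really used. Hypothesis (ii) is exactly what produces the separating functional $L$ and hence is essential to the half-plane argument above. Hypothesis (i), simplicity of $\lambda_i$ and $\lambda_j$, is not needed for the span conclusion as stated here, but it guarantees that $u_i$ and $u_j$ are determined up to scalars so that $\operatorname{span}(u_i,u_j)$ is the genuinely two-dimensional eigenspace attached to the edge $\lambda_i\lambda_j$; it is presumably invoked in the application of the lemma (to set up a $2\times 2$ compression and apply the elliptical range theorem) rather than in its proof. The main subtlety, and the only place one must be slightly careful, is the passage from "the convex combination $\sum_k w_k \lambda_k$ equals a point of the segment" to "the weights off $\{i,j\}$ vanish": this is not true for an arbitrary point of $F(A)$, only for boundary points lying on an edge whose supporting line strictly separates the remaining vertices — which is precisely what (ii) encodes. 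No heavy machinery (Kippenhahn, the boundary being piecewise algebraic) is needed; the argument is elementary once the problem is phrased in the eigenbasis.
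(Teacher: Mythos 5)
Your proof is correct and follows essentially the same route as the paper's: expand $x$ in the orthonormal eigenbasis of the normal matrix $A$, observe that $f_A(x) = \sum_k |c_k|^2 \lambda_k$ is a convex combination of the eigenvalues, and conclude that the weights off $\{i,j\}$ must vanish. Your separating affine functional $L$ in fact makes rigorous the step the paper dispatches with the parenthetical ``otherwise, $z$ would be forced off the line segment,'' and your observation that it is hypothesis (ii), rather than the simplicity of $\lambda_i$ and $\lambda_j$, that drives this final step is an accurate reading of where the hypotheses are actually used.
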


\begin{proof}
Let $x \in f_A^{-1}(\{ z \})$. Since the columns of $U$ form an orthonormal basis, there are unique scalars $c_1,\dots,c_n$ such that 
\[ x = \sum_{k=1}^n c_k u_k \]
and \( u_k u_\ell = \delta_{k\ell}\) (here, $\delta$ denotes the Kronecker delta). 
Since $x$ is a unit vector, a simple calculation reveals that
\[ \sum_{k=1}^n |c_k|^2 = x^*x = 1 .\]

As $U^*u_k = e_k$ for $k \in \{1,\dots,n\}$, we have 
\begin{align*}
z = x^*Ax 
&= \left( \sum_{k=1}^n c_k u_k \right)^* A \left( \sum_{k=1}^n c_k u_k \right) 										\\
&= \left( U^* \sum_{k=1}^n c_k u_k \right)^* D \left(U^*  \sum_{k=1}^n c_k u_k \right) 									\\
&= \left( \sum_{k=1}^n c_k e_k \right)^* D \left( \sum_{k=1}^n c_k e_k \right) 										\\
&= \left( \sum_{k=1}^n \overline{c_k} e_k^\top \right) \left( \sum_{k=1}^n c_k \lambda_k e_k \right) = \sum_{k=1}^n |c_k|^2 \lambda_k.
\end{align*}
Since $\lambda_i$ and $\lambda_j$ are simple eigenvalues of $A$, it follows that $c_k = 0$ whenever $k \ne i,j$ (otherwise, $z$ would be forced off the line segment $\lambda_i \lambda_j$). Thus, $x \in \text{span}(u_i,u_j)$.
\end{proof}

In \cite{s1865} it was shown that the critical points of a polynomial of degree $n$ having distinct zeros are the foci of the curve of class $n - 1$ that is tangent to each line segment joining the zeros of the polynomial at its midpoints. The following result is a weaker version of this result that, in addition to its usefulness, is of interest in its own right.

\begin{theorem}
[Poor-man's Siebeck]
\label{pmsiebeck}
Let $p$ be a polynomial of degree $n$ with zeros $\lambda_1,\dots, \lambda_n$ and critical points $\mu_1,\dots,\mu_{n-1}$. Let $C$ denote the convex hull of $\lambda_1, \dots, \lambda_n$ and $P$ be the convex polygon coinciding with the boundary of $C$. Suppose that for every pair of adjacent vertices $\lambda_i$ and $\lambda_j$ of $P$ we have (i) $\lambda_i$ and $\lambda_j$ are simple eigenvalues of $A$; and (ii) one of the open half-planes determined by the line that passes through $\lambda_i$ and $\lambda_j$ contains the remaining eigenvalues of $A$. Then there is a piecewise algebraic curve contained in $C$ that is tangent to the polygon $P$ at the midpoints of its sides and such that $\mu_i$ is contained in the convex hull of the piecewise algebraic curve. Furthermore, the midpoints of the sides of $P$ are the only points of tangency. 
\end{theorem}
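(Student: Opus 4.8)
The plan is to take for the curve the boundary of the field of values of a principal submatrix of the same matrix already used for Gauss--Lucas (\hyp{Theorem}{thm:gl}). Let \(D := \diag{\lambda_1,\dots,\lambda_n}\), let \(U := F/\sqrt{n}\) with \(F\) the DFT matrix of order \(n\), put \(A := UDU^*\), and set \(\Gamma := \partial F(A_{(1)})\). Then \(A\) is normal with \(\sigma(A) = \{\lambda_1,\dots,\lambda_n\}\), so \(F(A) = C\) by part~\ref{nrnormal} of \hyp{Proposition}{fvprops}; \(\Gamma\) is a piecewise algebraic curve by \hyp{Corollary}{pwac}; \(\Gamma \subseteq F(A_{(1)}) \subseteq F(A) = C\) by part~\ref{nrsubmatrix} of \hyp{Proposition}{fvprops}; and \(\conv{\Gamma} = F(A_{(1)})\) because \(F(A_{(1)})\) is compact and convex. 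Moreover, by \hyp{Theorem}{thm:main} and part~\ref{nrspectrum} of \hyp{Proposition}{fvprops}, \(\{\mu_1,\dots,\mu_{n-1}\} = \sigma(A_{(1)}) \subseteq F(A_{(1)}) = \conv{\Gamma}\), which already places every \(\mu_i\) in \(\conv{\Gamma}\).

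The crux is to show that \(F(A_{(1)})\) meets the polygon \(P\) in exactly the midpoints of its sides. Since \(A_{(1)}\) is the compression of \(A\) onto \(\{x \in \mathbb{C}^n : x_1 = 0\}\), we identify \(F(A_{(1)})\) with \(\{x^*Ax : x \in \mathbb{S}^n,\ x_1 = 0\}\); write \(u_k\) for the \(k\)th column of \(U\), so that \(Au_k = \lambda_k u_k\), and note that the first row of \(U = F/\sqrt{n}\) is the constant vector with all entries \(1/\sqrt{n}\), hence \((u_k)_1 = 1/\sqrt{n}\) for every \(k\). Fix a side of \(P\) with adjacent endpoints \(\lambda_i,\lambda_j\) and put \(m := (\lambda_i + \lambda_j)/2\). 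The vector \(x := \frac{1}{\sqrt{2}}(u_i - u_j)\) is a unit vector with \(x_1 = 0\) and, by orthonormality of the \(u_k\), \(x^*Ax = \frac{1}{2}\lambda_i + \frac{1}{2}\lambda_j = m\), so \(m \in F(A_{(1)})\). Conversely, if \(z = (1-t)\lambda_i + t\lambda_j \in F(A_{(1)})\) for some \(t \in [0,1]\), write \(z = x^*Ax\) with \(x \in \mathbb{S}^n\), \(x_1 = 0\); since hypotheses (i) and (ii) hold for \(\lambda_i,\lambda_j\), \hyp{Lemma}{lem:sides} gives \(x = c_iu_i + c_ju_j\), and then \(z = |c_i|^2\lambda_i + |c_j|^2\lambda_j\) forces \(|c_i|^2 = 1-t\) and \(|c_j|^2 = t\) (as \(\lambda_i \ne \lambda_j\)), while \(0 = x_1 = \frac{1}{\sqrt{n}}(c_i + c_j)\) forces \(|c_i| = |c_j|\); hence \(t = 1/2\) and \(z = m\). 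Because the hypotheses of the theorem supply (i) and (ii) for every side, \(F(A_{(1)}) \cap P\) is precisely the set of midpoints of the sides of \(P\).

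Tangency is then forced by convexity. Fix a side \([\lambda_i,\lambda_j]\) with midpoint \(m\) and let \(L\) be the line through \(\lambda_i\) and \(\lambda_j\); as \([\lambda_i,\lambda_j]\) is a side of the convex polygon \(P = \partial C\), the set \(C\) lies in one closed half-plane bounded by \(L\) and \(C \cap L = [\lambda_i,\lambda_j]\). Hence \(F(A_{(1)}) \subseteq C\) lies in that half-plane, and \(F(A_{(1)}) \cap L \subseteq [\lambda_i,\lambda_j]\), which by the previous step meets \(F(A_{(1)})\) only at \(m\). So \(L\) is a supporting line of the compact convex set \(F(A_{(1)})\) meeting \(\partial F(A_{(1)}) = \Gamma\) in the single point \(m\) (in particular \(m \in \Gamma\)); thus \(L\), and with it the side of \(P\) in whose relative interior \(m\) lies, is tangent to \(\Gamma\) at \(m\). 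Conversely, any point of tangency of \(\Gamma\) with \(P\) lies in \(\Gamma \cap P \subseteq F(A_{(1)}) \cap P\), the set of midpoints; so the midpoints of the sides of \(P\) are exactly the points of tangency, and \(\Gamma\) has all the claimed properties.

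The one genuinely delicate point is the last step of the tangency argument---inferring that \(\Gamma\) is tangent to \(L\) at \(m\) from the fact that \(L\) supports \(F(A_{(1)})\) and meets \(\partial F(A_{(1)})\) in a single point. For a convex body this is essentially the definition of the tangent line at a boundary point, but some care is needed to exclude the possibility that \(m\) is a corner of the piecewise algebraic curve \(\Gamma\); packaging this cleanly---or, in the spirit of Siebeck, using the elliptical range theorem (\hyp{Theorem}{ert}) to identify \(\Gamma\) with a bona fide ellipse when \(n = 3\), thereby recovering Marden's theorem---is the only place where something more than bookkeeping is needed.
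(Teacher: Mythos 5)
Your proposal is correct and follows essentially the same route as the paper: the same matrix \(A = UDU^*\) built from the DFT matrix, the same test vector \((u_i - u_j)/\sqrt{2}\) to hit the midpoint, and the same appeal to Lemma \ref{lem:sides} to rule out other points of the segment. Your version of the uniqueness step is in fact slightly more careful than the paper's (which asserts \(w = v\) when really \(w\) is only a unimodular multiple of \(v\)), and your explicit supporting-line justification of tangency fills in a step the paper leaves implicit.
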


\begin{proof}
Let \( D := \diag{\lambda_1, \dots, \lambda_n}\), $U := F/\sqrt{n}$, and \( A := U D U^*\), in which \( F \) is the DFT matrix of order \(n\). Following \hyp{Corollary}{pwac}, the boundary of \( F(A_{(1)})\) is a piecewise algebraic curve and, following \hyp{Theorem}{thm:main} and \hyp{Proposition}{fvprops}, it must be contained in $C$ since \( F(A_{(1)}) \subseteq F(A) = \conv{\lambda_1, \dots, \lambda_n} =C \). Clearly, the critical points are contained in the convex hull of $C$. 

Next, we show that the boundary of \( F\left( A_{(1)} \right)\) is tangent to $P$ at the midpoints of its sides. To that end, suppose that $\lambda_i$ and $\lambda_j$ are adjacent vertices of $P$. If \( u_k \) denotes the \(k\){th} column of \(U\), then \( u_k^* u_\ell = \delta_{k \ell}\). 
If \( v := (u_i - u_j)/\sqrt{2}\),  
then 
\[ \begin{Vmatrix} v \end{Vmatrix}_2^2 = \frac{(u_i - u_j)^*(u_i - u_j)}{2}  = \frac{u_i^*u_i + u_j^*u_j}{2} = 1, \] 
and \( Av = (\lambda_i u_i - \lambda_j u_j)/\sqrt{2}\).   
The first entry of \(v\) is zero since the first row of \(F\) is the all-ones vector; thus, \( v_{(1)} \) is a unit vector, \( v_{(1)}^* A_{(1)} v_{(1)} \in F(A_{(1)}) \),
and 
\begin{align*}
v_{(1)}^* A_{(1)} v_{(1)} = v^* Av 
= \frac{\left( u_i^* - u_j^* \right) \left( \lambda_i u_i - \lambda_j u_j \right)}{2} 	
&= \frac{\lambda_i u_i^* u_i + \lambda_j u_j^* u_j}{2} 					\\
&= \frac{\lambda_i + \lambda_j}{2}, 
\end{align*}
i.e., the midpoint of the line segment \( \lambda_i \lambda_j\) belongs to both \( F(A_{(1)}) \) and \( F(A) \). 

We now show that the midpoint $z := v_{(1)}^*A_{(1)} v_{(1)}$ is the only point of tangency of the line segment $\lambda_i \lambda_j$. Notice that for any point $x \in F(A_{(1)})$ there is a unit vector $w$ whose first entry is zero such that $x=w^*Aw$. If $x$ is on the line segment $\lambda_i\lambda_j$, then $x$ must be in the span of $u_i$ and $u_j$ by Lemma \ref{lem:sides}. Since the first component of $w$ is zero, it follows that $w = v$, i.e., $x = z$. 
\end{proof}

\begin{remark}
\hyp{Figure}{fig:pms} illustrates \hyp{Theorem}{pmsiebeck}. 
\end{remark}

\begin{figure}[H]
\centering
\includegraphics[width=.75\linewidth]{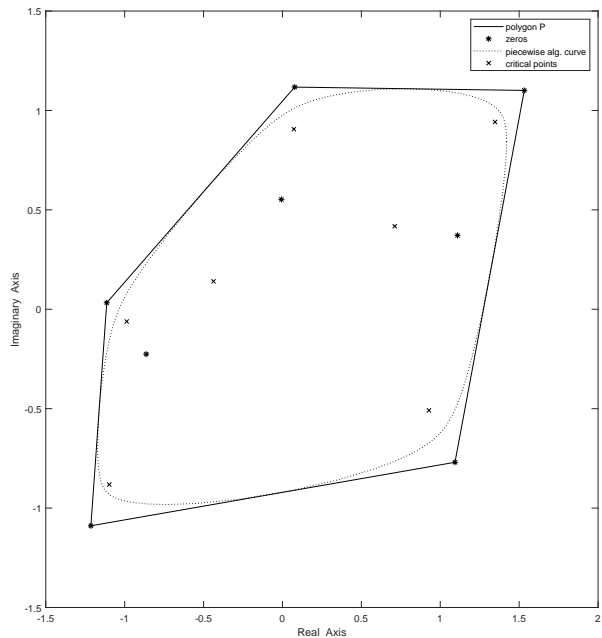}
\caption{A randomly generated example in MATLAB illustrating Theorem \ref{pmsiebeck}.}
\label{fig:pms}
\end{figure}

The B\^{o}cher--Grace--Marden theorem is now an immediate consequence of \hyp{Theorem}{ert} amd \hyp{Theorem}{pmsiebeck}.

\begin{theorem}
[B\^{o}cher--Grace--Marden]
\label{bgm}
If $p$ is a polynomial of degree three with noncollinear zeros \( \lambda_1, \lambda_2,\) and \(\lambda_3\), then the critical points of $p$ are the foci of the unique ellipse inscribed in the triangle with vertices \( \lambda_1, \lambda_2,\) and \(\lambda_3\) tangent to its sides at their midpoints. 
\end{theorem}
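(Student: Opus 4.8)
The plan is to run the machinery behind \hyp{Theorem}{pmsiebeck} with $n = 3$ and then invoke the elliptical range theorem to recognize the resulting piecewise algebraic curve as a genuine ellipse. Set $D := \diag{\lambda_1,\lambda_2,\lambda_3}$, $U := F_3/\sqrt{3}$, and $A := UDU^*$, so that, by \hyp{Theorem}{thm:main}, the eigenvalues of the $2$-by-$2$ matrix $A_{(1)}$ are exactly $\mu_1$ and $\mu_2$. Noncollinearity of $\lambda_1,\lambda_2,\lambda_3$ forces them to be pairwise distinct, hence each is a simple eigenvalue of $A$, and for each side of the polygon $P$ bounding $C := \conv{\lambda_1,\lambda_2,\lambda_3}$ the opposite vertex lies strictly in one of the two open half-planes determined by the line through that side. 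Thus hypotheses (i) and (ii) of \hyp{Theorem}{pmsiebeck} hold for every pair of adjacent vertices, and that theorem produces a piecewise algebraic curve $\Gamma \subseteq C$ --- namely the boundary of $F(A_{(1)})$ --- that is tangent to $P$ precisely at the three midpoints of its sides, with $\mu_1,\mu_2 \in F(A_{(1)})$ by part \ref{nrspectrum} of \hyp{Proposition}{fvprops}.

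The next step is to upgrade $\Gamma$ from ``piecewise algebraic curve'' to ``ellipse.'' Since $A_{(1)} \in M_2(\mathbb{C})$ with $\sigma(A_{(1)}) = \{\mu_1,\mu_2\}$, the elliptical range theorem (\hyp{Theorem}{ert}) says $F(A_{(1)})$ is a possibly degenerate elliptical disk with foci $\mu_1$ and $\mu_2$; hence $\Gamma = \partial F(A_{(1)})$ is a genuine ellipse, a line segment, or a single point. The degenerate alternatives are impossible here: the three midpoints of the sides of $P$ lie on $\Gamma$ and are the vertices of the medial triangle of $P$, which is a scaled copy of the nondegenerate triangle $P$ and is therefore itself nondegenerate; so these three points are noncollinear and $\Gamma$ cannot be contained in a line. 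Consequently $F(A_{(1)})$ is a nondegenerate elliptical disk, $\Gamma$ is a true ellipse with foci $\mu_1,\mu_2$, and (as $F(A_{(1)}) \subseteq C$) it is inscribed in the triangle $C$, tangent to each side at its midpoint and, by the final sentence of \hyp{Theorem}{pmsiebeck}, tangent nowhere else.

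It remains to identify $\Gamma$ with the ellipse named in the statement. A nondegenerate triangle has exactly one inscribed ellipse tangent to all three sides at their midpoints --- the Steiner inellipse --- this classical uniqueness being visible, for instance, by mapping the triangle affinely onto an equilateral one (tangency at midpoints is affinely invariant), where the three-fold symmetry forces such a conic to be the incircle. Since $\Gamma$ has precisely this tangency property, $\Gamma$ is that unique ellipse, and we have shown its foci to be $\mu_1$ and $\mu_2$, i.e., the critical points of $p$.

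The step I expect to require the most care is the non-degeneracy of $F(A_{(1)})$: one must rule out $\Gamma$ collapsing to a segment or a point (equivalently, rule out $A_{(1)}$ being normal), and the clean route --- rather than computing $\trace(A_{(1)}^*A_{(1)}) - |\mu_1|^2 - |\mu_2|^2$ --- is the observation above that $\Gamma$ passes through the three noncollinear side-midpoints. The uniqueness of the Steiner inellipse is classical, and in the write-up I would simply cite it; I would also remark that coincident critical points are genuinely possible (e.g., when $\lambda_1,\lambda_2,\lambda_3$ form an equilateral triangle), in which case $\Gamma$ is a circle and the two foci merge at its center, which is consistent with the statement.
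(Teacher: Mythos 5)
Your proof is correct and follows exactly the route the paper intends: the paper gives no separate argument for this theorem, stating only that it is an immediate consequence of \hyp{Theorem}{ert} and \hyp{Theorem}{pmsiebeck}, which is precisely the combination you carry out. Your explicit ruling-out of the degenerate cases of the elliptical range theorem (via the noncollinearity of the three side-midpoints) is a detail the paper leaves tacit, and it is handled correctly.
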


\begin{remark}
The ellipse described in \hyp{Theorem}{bgm} is called the \emph{Steiner inellipse}. For more information, and a proof of its uniqueness, see, e.g., Steiner \cite{s1882} or Kalman \cite{k2008}. 

\hyp{Figure}{fig:bgm} contains an example illustrating Theorem \ref{bgm}.
\end{remark}

\begin{figure}[H]
\centering
\includegraphics[width=.75\linewidth]{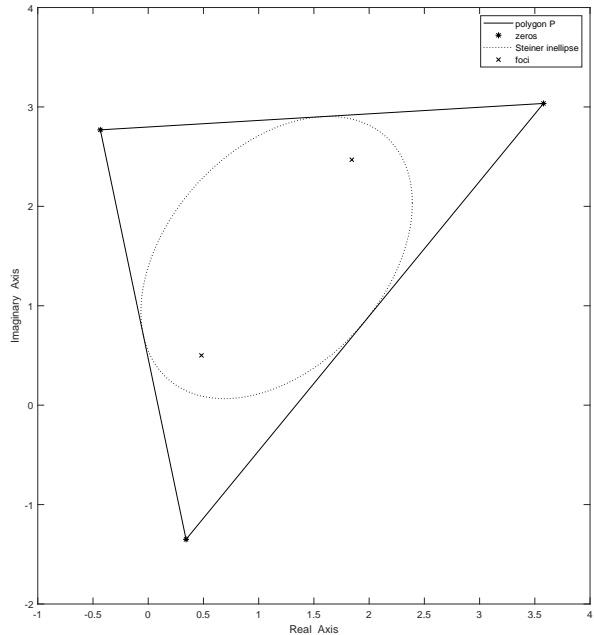}
\caption{A randomly generated example in MATLAB illustrating Theorem \ref{bgm}.}
\label{fig:bgm}
\end{figure}

\section{Concluding Remarks.}

Of interest would be a proof of the full version of Siebeck's theorem with the methods outlined above.

\begin{theorem}
[Siebeck \cite{s1865}]
\label{siebeckfull}
If \(\lambda_1, \dots, \lambda_n \in \mathbb{C}\) are distinct and 
\[ p(t) = \alpha \prod_{i=1}^n (t - \lambda_i)^{m_i} \in \bb{C}[t],~\alpha \ne 0, \]  
then the critical points of \(p\) are the foci of the curve of class \( n - 1 \) that touches each line segment \( \lambda_i \lambda_j \) in a point dividing the line segment in the ratio \( m_i:m_j \).  
\end{theorem}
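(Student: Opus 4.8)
The plan is to produce an explicit $(n-1)$-by-$(n-1)$ matrix whose eigenvalues are the ``nontrivial'' critical points of $p$ and to take the curve of \hyp{Theorem}{siebeckfull} to be its Kippenhahn curve. Write $N := \deg p = \sum_{i=1}^n m_i$, so that $p$ has $N-1$ critical points, of which $m_i-1$ coincide with $\lambda_i$ and the remaining $n-1$ are distinct from every $\lambda_i$. Set $D := \diag{\lambda_1,\dots,\lambda_n} \in M_n(\bb{C})$ (the $\lambda_i$ are now distinct), let $z \in \bb{C}^n$ be the unit vector with $|z_i|^2 = m_i/N$ for each $i$, put $P := I - zz^*$, and let $B \in M_{n-1}(\bb{C})$ be the compression of $D$ onto $P\bb{C}^n$; equivalently $B = V^* D V$ for any $V \in M_{n,n-1}(\bb{C})$ whose columns form an orthonormal basis of $z^\perp$. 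When $m_1 = \dots = m_n = 1$ this is exactly the differentiator construction, $z$ being a trace vector for $D$ by \hyp{Lemma}{lem:diagtrace}; for unequal $m_i$, $z$ is no longer a trace vector and $P$ is not a differentiator, yet $B$ still records what we need, because of the identity
\[
\det\!\left(tI_{n-1} - B\right) = \left(\prod_{i=1}^n (t-\lambda_i)\right)\!\sum_{i=1}^n \frac{|z_i|^2}{t-\lambda_i} = \frac1N \sum_{i=1}^n m_i \prod_{j \ne i}(t-\lambda_j),
\]
which generalizes \hyp{Theorem}{thm:main} and follows in general from the matrix determinant lemma applied to $\det(tI_n - D - \alpha z z^*)$ together with the limiting behavior of this rank-one perturbation as $\alpha \to \infty$. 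Since $\sum_i m_i/(t-\lambda_i) = p'(t)/p(t)$, the right-hand side equals $p'(t)/\bigl(N\prod_i(t-\lambda_i)^{m_i-1}\bigr)$, so $\sigma(B)$ is precisely the multiset of the $n-1$ critical points of $p$ that are not zeros of $p$.

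Take $\kappa_P$ to be the Kippenhahn curve of $B$; by Kippenhahn's theorem it has class $n-1$, and with $H_1 := (B+B^*)/2$, $H_2 := (B-B^*)/(2i)$ its dual satisfies $P^\delta(u,v,w) = \det\!\left(uH_1 + vH_2 + wI_{n-1}\right)$. Every element of $F(B) = \{x^* D x : x^* x = 1,\ x \perp z\}$ is a convex combination of $\lambda_1,\dots,\lambda_n$, so $\Re(\gamma_{P(x,y,1)}) \subseteq F(B) \subseteq \conv{\lambda_1,\dots,\lambda_n} =: C$, whence the real affine part of $\kappa_P$ lies inside the polygon $C$. That the eigenvalues of $B$ are foci of $\kappa_P$ is immediate: for $\mu = a+bi$, the lines joining $\mu$ to the circular points $I$ and $J$ have coordinates $(1,i,-\mu)$ and $(1,-i,-\overline{\mu})$, and
\[
P^\delta(1,i,-\mu) = \det(B - \mu I), \qquad P^\delta(1,-i,-\overline{\mu}) = \det(B^* - \overline{\mu} I) = \overline{\det(B - \mu I)},
\]
both of which vanish when $\mu \in \sigma(B)$; since a curve of class $n-1$ has $n-1$ real foci, the $n-1$ critical points found above are exactly the foci of $\kappa_P$.

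It remains to show, for each $i \ne j$, that $\kappa_P$ touches the segment $\lambda_i\lambda_j$ at the point $\zeta_{ij} := (m_j\lambda_i + m_i\lambda_j)/(m_i+m_j)$, which divides it in the ratio $m_i : m_j$. Let $(u_0,v_0,w_0)$ be the (real) coordinates of the line through $\lambda_i$ and $\lambda_j$; writing $u_0 - iv_0 = \rho e^{-i\varphi}$ with $\rho > 0$ and setting $r_k := \Re(e^{-i\varphi}\lambda_k)$, one has $r_i = r_j =: c$ and $u_0 H_1 + v_0 H_2 + w_0 I = \rho\,(G_\varphi - cI)$, where $G_\varphi := \cos\varphi\,H_1 + \sin\varphi\,H_2 = V^* \diag{r_1,\dots,r_n} V$ is the compression of a real diagonal matrix to $z^\perp$. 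By the identity of the first paragraph (with $\lambda_k$ replaced by $r_k$), $\det(sI - G_\varphi) = \sum_k |z_k|^2 \prod_{\ell \ne k}(s-r_\ell)$, every term of which vanishes at $s = c$ because $r_i = r_j = c$; hence $P^\delta(u_0,v_0,w_0) = 0$, so the line, and therefore the segment $\lambda_i\lambda_j$ containing $\zeta_{ij}$, is tangent to $\kappa_P$. If no third zero lies on this line, the kernel of $G_\varphi - cI$ is one-dimensional: solving $G_\varphi y_0 = c y_0$ and pulling back to $x_0 := V y_0 \in z^\perp$ forces $(\diag{r_1,\dots,r_n} - cI)\,x_0 \in \text{span}(z)$, whence $x_0$ is supported on the coordinates $i$ and $j$ with $|(x_0)_i|^2 = m_j/(m_i+m_j)$ and $|(x_0)_j|^2 = m_i/(m_i+m_j)$. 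By the standard identification (via the gradient of $P^\delta$) of the contact point of the tangent line $(u_0,v_0,w_0)$ with $y_0^* B y_0$, where $y_0$ is a null vector of the Hermitian matrix $u_0H_1 + v_0H_2 + w_0 I$, this point is $y_0^* B y_0 = x_0^* D x_0 = \zeta_{ij}$, and it is the unique point of $\kappa_P$ on the segment.

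The main obstacle is degeneracy. If three or more of the $\lambda_i$ are collinear, the last step breaks down: the kernel of $G_\varphi - cI$ can be higher-dimensional and $\kappa_P$ may acquire lower-dimensional components, so such cases --- the extreme one being all zeros real, where the statement degenerates to the interlacing already recorded --- would have to be handled separately, presumably by a perturbation argument from the generic case. A secondary point is that asserting $\kappa_P$ to be \emph{the} curve of class $n-1$ with the stated tangency data requires a uniqueness theorem for curves of given class with prescribed tangencies; a parameter count makes this plausible but it sits outside the matricial framework. Finally, a fully self-contained treatment would want to dispense with the Kippenhahn tangent-line/contact-point correspondence for the interior chords $\lambda_i\lambda_j$ --- whose contact points $\zeta_{ij}$ are interior to $F(B)$ and thus invisible to the convexity arguments that sufficed for Gauss--Lucas and the poor-man's version --- and it is precisely here that the machinery of the present paper needs to be extended.
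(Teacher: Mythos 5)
You should be aware that the paper does not prove Theorem \ref{siebeckfull}: it is stated in the concluding remarks as a classical result, cited to Siebeck, and the authors explicitly pose finding a proof ``with the methods outlined above'' as an open problem. There is therefore no proof in the paper to compare yours against; what you have written is an attempt at precisely the extension the authors are asking for, and it has to be judged on its own terms.

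On those terms, your strategy is the natural one and the computations you actually carry out are correct. The identity $\det\left(tI_{n-1}-B\right)=\det\left(tI_n-D\right)z^*\left(tI_n-D\right)^{-1}z$ for the compression onto $z^\perp$ is right and does identify $\sigma(B)$ with the $n-1$ nontrivial critical points; the verification that eigenvalues of $B$ are foci via $P^\delta(1,i,-\mu)=\det(B-\mu I)$ is the standard one; and the pullback computation showing that the null vector of $G_\varphi-cI$ is supported on coordinates $i,j$ with $x_0^*Dx_0=\zeta_{ij}$ is correct (the step you use implicitly is that $(r_i-c)(x_0)_i=\beta z_i$ with $z_i\neq0$ forces $\beta=0$). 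But the argument is not a complete proof, and you have named the genuine gaps yourself: (1) the identification of the contact point of the tangent line $(u_0,v_0,w_0)$ with $y_0^*By_0$ for a null vector $y_0$ of $u_0H_1+v_0H_2+w_0I$ is a real theorem about the Kippenhahn boundary-generating curve that you invoke without proof, and it is indispensable here because the chords $\lambda_i\lambda_j$ are interior to $F(B)$, so the support-line and convexity arguments that carried Theorem \ref{pmsiebeck} are unavailable; (2) collinear configurations of zeros destroy the one-dimensionality of the kernel of $G_\varphi-cI$ and would need a separate perturbation or limiting argument; (3) the definite article in ``the curve of class $n-1$'' presupposes a uniqueness statement you do not supply; and (4) the claim that a class-$(n-1)$ curve has exactly $n-1$ real foci requires genericity hypotheses (no singular tangents through the circular points). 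Items (2)--(4) are standard but must be addressed; item (1) is the substantive mathematical content separating full Siebeck from the poor-man's version, and until it is established within the framework your argument remains an outline of the known Kippenhahn-curve proof rather than a proof.
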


\subsection{Authors' Note.}

After this manuscript was submitted, it was brought to our attention that another \textsc{Monthly} article \cite{gork} also provided a matricial proof of the B\^{o}cher--Grace--Marden theorem (Theorem \ref{bgm}). Both provide a more efficient explanation of B\^{o}cher--Grace--Marden than \cite{k2008}, which was intended to clarify and put B\^{o}cher--Grace--Marden on a rigorous footing (using only analysis and not matrices). Both the present proof and that of \cite{gork} use the field of values and principal submatrix containment. Otherwise, however, they are very different. By producing a normal matrix from the polynomial roots, using the DFT matrix and some other simple, but not so well known, matricial ideas, our proof is much shorter and may make this surprising fact even more transparent. 

\begin{acknowledgment}{Acknowledgment.}
The authors thank the anonymous referees and editor-in-chief Susan Colley for their helpful comments. 
\end{acknowledgment}


\begin{biog}
\item[Charles R.~Johnson] graduated from Elkhart (IN) High School in 1966, from Northwestern University, with a degree in Mathematics and Economics, in 1969, and then received his Ph.D. from the California Institute of Technology in 1972. After an NRC/NAS postdoc at the National Bureau of Standards, he took a joint position in the Institute for Physical Science and Technology and the Department of Economics at the University of Maryland in 1974, where he was tenured in 1976. After two years as Professor of Mathematical Sciences at Clemson University, he took the Class of 1961 Professorship of Mathematics at William and Mary in 1987. He has now published well over 400 papers and several books, including \emph{Matrix Analysis} and \emph{Topics in Matrix Analysis} (with Roger Horn) and \emph{Totally Nonnegative Matrices} (with Shaun Fallat). His most recent book, \emph{Eigenvalues, Multiplicities and Graphs} (with Carlos Saiago), just appeared from Cambridge University Press. Much of his work is in matrix theory and combinatorics, but he has papers in journals of physics, economics, psychology, finance, and statistics, etc. He has won awards such as the Washington Academy of Sciences Award for Outstanding Scientific Achievement and the Virginia Outstanding Faculty Member Award and been editor of several journals. He continues to run a long-standing REU program, for which he welcomes applications from any students deeply interested in mathematics. This has resulted in several dozen publications in high-level journals.
\begin{affil}
Department of Mathematics, College of William \& Mary, Williamsburg, VA 23187-8795, USA\\
crjohn@uw.edu
\end{affil}

\item[Pietro Paparella] received the Ph.D. degree in mathematics from Washington State University in 2013 under the supervision of Michael Tsatsomeros and Judi McDonald. From 2013 to 2015 he held the position of Visiting Assistant Professor in the Department of Mathematics at the College of William and Mary and since 2015 he has held the position of Assistant Professor in the Division of Engineering and Mathematics at the University of Washington Bothell. His research interests are in nonnegative matrix theory, combinatorial matrix theory, discrete geometry, and the geometry of polynomials. This work is his first in the \emph{American Mathematical Monthly}.  
\begin{affil}
Division of Engineering and Mathematics, University of Washington Bothell, Bothell, WA 98011, USA\\
pietrop@uw.edu
\end{affil}
\end{biog}
\vfill\eject

\end{document}